\newtheorem{theorem}{Theorem}
\newtheorem{lemma}{Lemma}
\theoremstyle{definition}
\newtheorem{remark}{Remark}
\begin{document}
\baselineskip17pt

\title{Dynamics of antibody levels: asymptotic properties}
\author[K. Pich\'or]{Katarzyna Pich\'or}
\address{K. Pich\'or, Institute of Mathematics,
University of Silesia, Bankowa 14, 40-007 Kato\-wi\-ce, Poland.}
\email{katarzyna.pichor@us.edu.pl}
\author[R. Rudnicki]{Ryszard Rudnicki}
\address{R. Rudnicki, Institute of Mathematics,
Polish Academy of Sciences, Bankowa 14, 40-007 Katowice, Poland.}
\email{rudnicki@us.edu.pl}
\keywords{Immune status, physiologically structured population, stochastic semigroup, asymptotic stability, flow with jumps}
\subjclass[2010]{Primary: 47D06; Secondary: 35Q92; 60J75; 92D30}

\begin{abstract}
We study properties of a piecewise deterministic Markov process modeling 
the changes in  concentration of specific antibodies.
The evolution of densities of the process is described by a stochastic semigroup.
The long-time behaviour of this semigroup is studied. In particular
we prove theorems on its asymptotic stability.
\end{abstract}

\let\thefootnote\relax\footnote{This research was partially supported by
the  National Science Centre (Poland) Grant No. 2017/27/B/ST1/00100.}

\maketitle

\section{Introduction}
\label{intro}
In \cite{DGKT} the authors introduced a mathematical model of the immune system.
The immune status is the concentration of specific antibodies, which appear after infection with a pathogen
and remain in serum, providing protection against future attacks of that same pathogen. Over time 
the number of  antibodies decreases until the next infection.  
During fighting the invader the immunity is boosted and then 
the immunity is gradually waning, etc. 
Thus the concentration of antibodies is described by a stochastic process whose 
trajectories are decreasing functions $x(t)$ between subsequent infections.
These functions satisfy the differential equation
\begin{equation}
\label{wane}
x'(t)=g(x(t)).
\end{equation}
It is assumed that the time it takes the immune system to clear infection is negligible
and that if $x$ is the concentration of antibodies at the moment of infection, then $Q(x)>x$ is the concentration of antibodies 
just after clearance of infection. 
An explicit expression for $Q$ was derived in \cite{GKTD,TEGB-MK}.
It is also assumed that the moments of infections are independent of the state of the immune system and they
are distributed according to a Poisson process $(N_t)_{t\ge 0}$
 with rate $\Lambda>0$. 

The immune status is a flow on the interval $[0,\infty)$ with jumps at random moments $t_0<t_1<t_2<\dots$ (see Fig.~\ref{r:imun-syst1}).
Such a flow belongs to the family of piecewise deterministic Markov processes 
\cite{davis84,RT-K-k}. We denote this process by $(\xi_t)_{t\ge 0}$ and it is defined by the following equations
\[
\xi_{t_n}=Q(\xi_{t_{n-1}^-}),\quad \xi_t'=g(\xi_t)\,\,\,\textrm{for $t\in [t_{n-1},t_n)$},\quad N_{t_n}=N_{t_n^-}+1=n.
\]
It means that the process $(\xi_t)_{t\ge 0}$ satisfies
the following stochastic differential equation
\[
d\xi_t=g(\xi_t)\,dt+(Q(\xi_t)-\xi_t)\,dN_t.
\]

\begin{figure}
\begin{center}
\begin{picture}(340,105)(20,20)
\includegraphics[viewport=115 554 492 703]{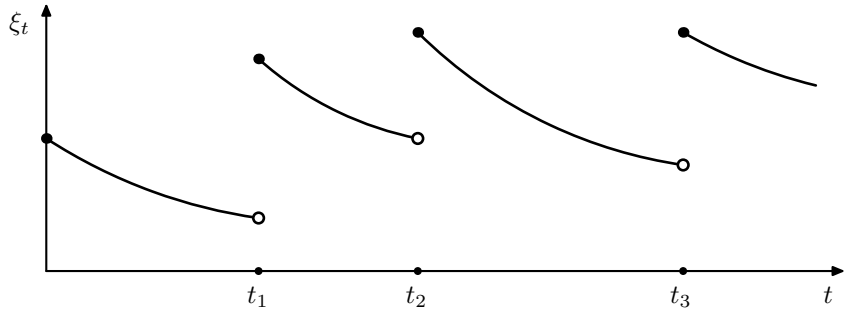}
\end{picture}
\end{center}
\caption{An example of a trajectory of the process $\xi_t$.}
\label{r:imun-syst1}
\end{figure}

One of the main interesting problems is the evolution of the distribution of this process, 
in particular the existence of a unique stationary density $f^*$ and its asym\-ptotic stability.
It is worth to mention that if the process $(\xi_t)_{t\ge 0}$ has a unique stationary density $f^*$
then, according to the ergodic theorem, $f^*$ is the density of  distribution of the immune status in the population.  
In \cite{DGKT} the asymptotic stability of a stationary density $f^*$ was proved
for a function $Q$ which is unimodal and has properties: $\lim_{x\to0}Q(x)=\infty$
and $\lim_{x\to\infty}(Q(x)-x)=\text{const}$.

The aim of this note is to show that asymptotic stability holds for a large class of $C^1$-functions $Q$.
In particular we extend the result from the paper \cite{DGKT} to the significant case when the increase of the concentration of antibodies 
after the infection is bounded. Moreover, we present another technique to prove this result,
which seems to be easier in applications because it does  not require to prove directly the existence of an invariant density.
The main idea of the paper is to formulate the problem in the terms of stochastic semigroups and then apply some results concerning
the Foguel alternative~\cite{PR-JMMA2016,PR-SD2017}, which gives conditions when a stochastic semigroup is asymptotically stable or sweeping.

The organization of the paper is as follows.
In section~\ref{s:model} we present the assumptions concerning our model and formulate the main problem in the terms of stochastic semigroups.
Section~\ref{s:asympt} contains the definitions and results concerning
asymptotic properties of stochastic semigroups and the proof of the main result of the paper.
In the last section we discuss the  case when concentration of antibodies is bounded and we give some examples.

\section{A semigroup formulation of the problem}
\label{s:model}
Concerning $g$ and $Q$ we assume the following
\begin{itemize}
\item[(A1)] $g\colon [0,\infty)\to \mathbb R$ is a $C^1$-function such that $g(x)<0$ for $x>0$ and $g(0)=0$,
\item[(A2)] $Q\colon [0,\infty)\to (0,\infty)$ is a $C^1$-function such that $Q(x)>x$ for $x\ge 0$, 
\item[(A3)] $|A|=0 \Longrightarrow  |Q^{-1}(A)|=0$, where $A$ is a Borel subset of $[0,\infty)$ and $|\cdot|$ denotes the Lebesgue measure.
\end{itemize}
We denote by $\pi_tx_0$ the solution $x(t)$ of Eq.~\eqref{wane} with the initial condition $x(0)=x_0\ge 0$.

Assumption (A3) allows us to introduce   \cite{LiM,Rudnicki-LN} a linear operator $P_Q$ on the space $L^1=L^1[0,\infty)$ given by the formula
\begin{equation}
\label{def-FP}
\int_A P_Q f(x)\,dx=
\int_{Q^{-1}(A)} f(x)\,dx
\end{equation}
for each $f\in L^1$ and all Borel subsets $A$ of $[0,\infty)$.
The operator $P_Q$
is called the
{\it Frobenius--Perron operator} for the transformation~$Q$. 
The adjoint of the Frobenius--Perron operator
$P^*\colon L^{\infty}[0,\infty)\to L^{\infty}[0,\infty)$ is 
given by $P^*g(x)= g(Q(x))$
and it is called the \textit{Koopman operator} or the 
\textit{composition operator}.

Denote by $D$ the subset of the space
$L^1$ which contains all
\textit{densities}
\[
D=\{f\in L^1\colon \,\, f\ge 0,\,\, \|f\|=1\}.
\]
The Frobenius--Perron operator describes the evolution of densities under the action of the transformation $Q$ and it is an example of a
\textit{stochastic} or \textit{Markov  operator},
which is defined as a linear operator $P\colon  L^1\to L^1$ 
such that $P(D)\subset D$. 

The class of the functions $Q$ which satisfy (A3) is rather large.  For example if $Q$ is a $C^1$-function and there exists an at most countable family of intervals $[a_i,b_i]$, $i\in I$, such that 
\[
[0,\infty)=\bigcup_{i\in I}[a_i,b_i],\quad (a_i,b_i)\cap (a_j,b_j)=\emptyset\quad \text{for $i\ne j$}
\]
and $Q'(x)\ne 0$ for $x\in (a_i,b_i)$ and $i\in I$,
then $Q$ satisfies (A3) and the operator $P_Q$ is given by the formula
\begin{equation}
\label{F-P-operator-gladki}
P_Qf(x)=\sum_{i\in I_x} f(\varphi_i(x))|\varphi_i'(x)|,
\end{equation} 
where $\varphi_i$ is the inverse function of $Q\big|_{(a_i,b_i)}$
and $I_x=\{i\colon \varphi_i(x)\in (a_i,b_i)\}$.

Now we add the second ingredient to the model. 
If $f$ is the initial density of immune status and there is no infection till the time $t$, then 
the density of immune status at $t$ is given by $S(t)f$, where $S(t)$ is the Frobenius-Perron operator related to 
the transformation $x\mapsto \pi_tx$. In this way we obtain a $C_0$-semigroup of stochastic operators $\{S(t)\}_{t\ge 0}$ given by
\[
S(t)f(x)=
\begin{cases}
f(\pi_{-t}x)\dfrac{\partial (\pi_{-t}x)}{\partial x}&\text{if $\pi_{-t}x$ exists,}\\
0&\text{if $\pi_{-t}x$ does not exist}.
\end{cases}
\] 
The semigroup  $\{S(t)\}_{t\ge 0}$ has the infinitesimal generator $\mathcal A_0f(x)=-\dfrac{d}{dx}(g(x)f(x))$
with the domain $\mathfrak D(\mathcal A_0)=\{f\in L^1\colon \mathcal A_0f\in  L^1\}$. 
Here the notation $\mathcal A_0f\in  L^1$ means that $f$ is a locally absolutely continuous function, so $f'$ exists a.e., and $(gf)'\in L^1$.
The adjoint semigroup  $\{S^*(t)\}_{t\ge 0}$ on $L^{\infty}$ is given by the formula $S^*(t)f(x)=f(\pi_tx)$.

Finally, we combine both ingredients: waning and boosting of immunity status. Then the density $u(t)=u(t,x)$ of immune status  
satisfies the following evolution equation in $L^1$
\begin{equation}
\label{evol-eq}
u'(t)=\mathcal Au(t),
\end{equation}
where $\mathcal A=\mathcal A_0+\Lambda P_Q-\Lambda I$.
The solution $u(t)$ of this equation generates a \textit{stochastic semigroup} (i.e. a $C_0$-semigroup of stochastic operators) 
$\{U(t)\}_{t\ge 0}$. It means that if $f$ is the density of initial immune status then $U(t)f$ is the density of immune status at time $t$.
The semigroup $\{U(t)\}_{t\ge 0}$ is given by the \textit{Dyson-Phillips expansion}
\begin{equation}
\label{eq:dpf1b}
U(t)f=e^{-\Lambda t}\sum_{n=0}^\infty \Lambda^n S_n(t)f, 
\end{equation}
where
\begin{equation}
\label{eq:dpf2b}
S_0(t)f=S(t)f,\quad S_{n+1}(t)f=\int_0^tS_{n}(t-\tau)P_QS_0(\tau)f\,d\tau, \quad
n\ge 0.
\end{equation}
Similar formulas to \eqref{eq:dpf1b}--\eqref{eq:dpf2b} hold for the adjoint semigroup $\{U^*(t)\}_{t\ge 0}$ on $L^{\infty}$.
In particular if $f\ge 0$ then
\begin{equation}
\label{eq:oszac}
U^*(t)f\ge \Lambda e^{-\Lambda t}\int_0^t S^*(t-\tau)P^*_QS^*(\tau)f\,d\tau=\Lambda e^{-\Lambda t}\int_0^t  f(\pi_{\tau}Q(\pi_{t-\tau}x))\,d\tau.
\end{equation}
The  process $(\xi_t)_{t\ge 0}$  has the probability transition function $\mathcal P(t,x,\Gamma)$
given by 
\[
\mathcal P(t,x,\Gamma)=U^*(t)\mathbf 1_{\Gamma}(x) .
\]
Now inequality \eqref{eq:oszac} allows us to estimate $\mathcal P(t,x,\Gamma)$ from below 
\begin{equation}
\label{eq:oszac-tr-prob}
\mathcal P(t,x,\Gamma)\ge \Lambda e^{-\Lambda t}\int_0^t \mathbf 1_{\Gamma}(\pi_{\tau}Q(\pi_{t-\tau}x))\,d\tau.
\end{equation}
This inequality will play the crucial role in the proof of the existence and asymptotic stability of a stationary density.

\section{Asymptotic stability and sweeping}
\label{s:asympt} 
We start with some general definitions and results concerning asymptotic stability and sweeping of stochastic semigroups.

Let a triple $(X,\Sigma,\mu)$ be a $\sigma$-finite measure space.
A stochastic semigroup $\{P(t)\}_{t\ge 0}$ on $L^1=L^1(X,\Sigma,\mu)$ is called
{\it asymptotically stable} if  there exists a density $f^*$   such that
\begin{equation}
\label{d:as}
\lim _{t\to\infty}\|P(t)f-f^*\|=0 \quad \text{for}\quad f\in D.
\end{equation}
If the semigroup $\{P(t)\}_{t\ge 0}$ is generated by some
evolution equation $u'(t)=Au(t)$ then  
the asymptotic stability of $\{P(t)\}_{t\ge 0}$ 
means that the stationary solution $u(t)=f^*$ is asymptotically stable
in the sense of Lyapunov and this stability is global on the set $D$.

A stochastic semigroup $\{P(t)\}_{t\ge 0}$
is called {\it partially integral} if
there exist $t>0$ and a  measurable function $q(t,\cdot,\cdot)\colon X\times X\to [0,\infty)$ such that 
\begin{equation}
\label{w-eta0}
P(t)f(y)\ge \int_X q(t,x,y)f(x)\, \mu(dy)\quad \textrm{for $f\in D$},
\end{equation}
and
\begin{equation*}
\int_X\int_X  q(t,x,y)\,\mu(dx)\,\mu(dy)>0.
\end{equation*}
If $\mathcal P(t,x,dy)$ is the transition probability function corresponding to the stochastic semigroup $\{P(t)\}_{t\ge 0}$ 
then inequality \eqref{w-eta0} can be written  in an equivalent form $\mathcal P(t,x,dy)\ge q(t,x,y)\,dy$.
We will use the following criterion of asymptotic stability.

\begin{theorem}\cite{PR-jmaa2}
\label{asym-th2}
Let $\{P(t)\}_{t\ge 0}$ be a partially integral stochastic
semigroup. Assume that the  semigroup $\{P(t)\}_{t\ge 0}$ has
a unique invariant density $f^*$. If $f^*>0$ a.e., then the semigroup
$\{P(t)\}_{t\ge 0}$ is asymptotically stable.
\end{theorem}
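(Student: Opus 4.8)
The plan is to construct a nontrivial \emph{lower function} for $\{P(t)\}$ and then invoke the classical criterion (Lasota--Mackey): if there is $h_0\in L^1$ with $h_0\ge 0$, $\|h_0\|>0$ and $\lim_{t\to\infty}\|(P(t)f-h_0)^-\|=0$ for every $f\in D$, then $\{P(t)\}$ is asymptotically stable, and its stationary density must be the given $f^*$. Thus the whole content is the construction of $h_0$, and for that I would feed in partial integrality together with the two standing hypotheses on $f^*$.

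First I would reduce to a probability space: since $f^*>0$ a.e., $d\nu=f^*\,d\mu$ is a probability measure equivalent to $\mu$, and $\widetilde P(t)h:=(1/f^*)\,P(t)(hf^*)$ defines a stochastic semigroup on $L^1(X,\Sigma,\nu)$ with invariant density $\mathbf 1$, again partially integral; since asymptotic stability is unchanged by this conjugation I may assume $\mu(X)=1$ and $P(t)\mathbf 1=\mathbf 1$. Next, uniqueness of the invariant density gives ``full reach'': for every $f\in D$ and every $A$ with $\mu(A)>0$ there is $t$ with $\int_A P(t)f>0$ — otherwise the forward orbit of $f$ would stay inside a proper closed set $C$ (the measurable hull of $\bigcup_t\supp P(t)f$), and then $f^*\mathbf 1_C$, suitably normalized, would be a second invariant density, since $P(t)(f^*\mathbf 1_C)\le f^*\mathbf 1_C$ with equal integral forces equality. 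Finally I would combine this with the partially integral lower bound $P(t_0)f(y)\ge\int q(t_0,x,y)f(x)\,d\mu(x)$, $\iint q>0$: by full reach the orbit of any density eventually charges the set where the kernel lives, and pushing that mass one further step through $q$ yields a fixed $h_0\ge 0$, $\|h_0\|>0$, with $\lim_{t\to\infty}\|(P(t)f-h_0)^-\|=0$ uniformly over $f\in D$. The Lasota--Mackey criterion then closes the proof.

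I expect the \textbf{main obstacle} to be exactly this last propagation step. Partial integrality supplies only a kernel that is nontrivial on \emph{some} positive-measure rectangle, and the real work is to show that, once composed along the semigroup and because $f^*>0$ a.e.\ makes $X$ a single communicating class, its smoothing effect reaches all of $X$ \emph{uniformly in the initial density} — morally, that the semigroup can be neither sweeping, nor reducible, nor asymptotically oscillatory. One could instead sidestep it by quoting the Foguel-type alternative for partially integral stochastic semigroups (asymptotically stable \emph{or} sweeping with respect to sets of finite measure): the finite invariant measure $f^*\,d\mu$, with $\int_A f^*>0$ on every non-null set, is plainly incompatible with sweeping, so asymptotic stability is the only option. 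The remaining checks — strong continuity on the conjugated space, measurability of the transformed kernel, and the elementary identity $\int(P(t)f-\mathbf 1)^+=\int(P(t)f-\mathbf 1)^-$ behind the Lasota--Mackey step — are routine.
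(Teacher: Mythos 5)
First, a point of comparison: the paper does not prove Theorem~\ref{asym-th2} at all --- it is imported verbatim from \cite{PR-jmaa2} --- so there is no in-paper proof to measure your plan against. Judged on its own terms, your plan reproduces the standard architecture of the proof in the cited reference (conjugation by $f^*$ to normalize the invariant density, irreducibility extracted from uniqueness, and the Lasota--Mackey lower-function criterion), and the reduction steps are essentially sound. The ``full reach'' argument does work, but note that the inequality $P(t)(f^*\mathbf 1_C)\le f^*\mathbf 1_C$ is not automatic from $P(t)f^*=f^*$: you must first verify that the measurable hull $C$ of $\bigcup_{s}\supp P(s)f$ is absorbing, i.e. $P^*(t)\mathbf 1_{X\setminus C}=0$ a.e.\ on $C$ (take rational $s$, use strong continuity, and pair $P(s)f$ against $P^*(t)\mathbf 1_{X\setminus C}$); only then is $P(t)(f^*\mathbf 1_C)$ supported in $C$ and the equal-integral argument closes.

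The genuine gap is the one you flag yourself: producing a single nontrivial lower function $h_0$ valid for \emph{every} $f\in D$ out of a kernel $q(t_0,\cdot,\cdot)$ that is only nontrivial on some positive-measure rectangle. This is not a routine propagation step to be waved through ``morally'' --- it is the entire analytic content of the theorem, and in \cite{PR-jmaa2} it rests on an overlapping-supports/zero--two-law type argument for partially integral semigroups that takes real work. Your proposed sidestep does not close it. The only Foguel-type alternative available in this paper, Theorem~\ref{col-sw}, requires condition (K), which is strictly stronger than partial integrality (the paper notes that (K) implies partial integrality, not conversely), and it concludes sweeping only under the hypothesis that \emph{no} invariant density exists, so it is silent in the present situation where $f^*$ is given. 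A Foguel alternative for merely partially integral semigroups with a unique a.e.\ positive invariant density is not an independent off-the-shelf tool: in the literature it is derived from, and is essentially equivalent to, the statement you are trying to prove, so quoting it begs the question. As written, the proposal therefore either leaves the central step undone or replaces it by a circular citation; to complete it you would need to actually carry out the lower-bound propagation argument (or simply cite \cite{PR-jmaa2}, as the paper does).
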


A stochastic semigroup $\{P(t)\}_{t\ge 0}$ is
called \textit{sweeping}
with respect to a set $B\in\Sigma$ if for every
 $f\in D$
\begin{equation*}
\lim_{t\to\infty}\int_B P(t)f(x)\,\mu(dx)=0.
\end{equation*}
From now on we assume additionally  that $(X,\rho)$ is a separable metric space and   $\Sigma=\mathcal B(X)$ is the $\sigma$-algebra of Borel subsets of $X$.
We will consider stochastic semigroups $\{P(t)\}_{t\ge 0}$ 
which satisfy the following condition:

\noindent (K) for every $x_0\in X$ there exist  an $\varepsilon >0$,  a $t>0$,
and a measurable function 
$\eta\ge 0$ such that $\int \eta(x)\, \mu(dx)>0$ and
\begin{equation}
\label{w-eta3}
\mathcal P(t,x,dy)\ge \eta(y)\,\mu(dy) \quad \textrm{for $x\in B(x_0,\varepsilon)$},
\end{equation}
where
$B(x_0,\varepsilon)=\{x\in X:\,\,\rho(x,x_0)<\varepsilon\}$.

It is clear that if a stochastic semigroup satisfies condition (K) then it is partially integral.
We will need the following criterion of sweeping \cite[Corollary 2]{PR-JMMA2016}.
\begin{theorem}
\label{col-sw}
Assume that a stochastic semigroup $\{P(t)\}_{t\ge 0}$ satisfies
condition {\rm (K)} and has no invariant densities.
Then $\{P(t)\}_{t\ge 0}$ is sweeping from compact sets.
\end{theorem}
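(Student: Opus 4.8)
The plan is to reduce the statement to the implication: \emph{if $\{P(t)\}_{t\ge0}$ satisfies {\rm (K)} and is not sweeping from compact sets, then it has an invariant density.} The theorem follows from this at once, since a semigroup with no invariant density must then be sweeping from every compact set. So assume $\{P(t)\}_{t\ge0}$ is not sweeping from some compact set $B$; then there are $f_0\in D$, $\delta>0$ and times $t_n\to\infty$ with $\int_B P(t_n)f_0\,\mu(dx)\ge\delta$.

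First I would extract a fixed lower bound from condition {\rm (K)}. Cover the compact set $B$ by finitely many balls $B(x_1,\varepsilon_1),\dots,B(x_m,\varepsilon_m)$ furnished by {\rm (K)}, with associated times $\tau_1,\dots,\tau_m$ and functions $\eta_1,\dots,\eta_m$. For each $n$ some index $i$ has $\int_{B(x_i,\varepsilon_i)}P(t_n)f_0\,d\mu\ge\delta/m$, and since there are finitely many indices we may, after passing to a subsequence, assume $i$ is the same for all $n$. Using the correspondence between $P(\tau_i)$ and $\mathcal P(\tau_i,x,\cdot)$ together with \eqref{w-eta3},
\[
P(t_n+\tau_i)f_0=P(\tau_i)\bigl(P(t_n)f_0\bigr)\ \ge\ \Bigl(\int_{B(x_i,\varepsilon_i)}P(t_n)f_0\,d\mu\Bigr)\eta_i\ \ge\ \frac{\delta}{m}\,\eta_i.
\]
Hence, with $s_n:=t_n+\tau_i\to\infty$, one gets $P(s_n)f_0\ge\frac{\delta}{m}\,\eta_i$ for all $n$: a fixed amount of mass keeps returning, and in particular the dynamics keeps visiting the ``small'' ball $B(x_i,\varepsilon_i)$ on which {\rm (K)} provides a minorizing density.

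The next step, which I expect to be the main obstacle, is to turn this persistence into an invariant \emph{probability measure}. Exploiting the recurrence to the small ball, one runs a Krylov--Bogolyubov/Hasminskii-type argument to obtain tightness of the family of time averages $\frac1T\int_0^TP(t)f_0\,dt$, $T>0$; then a weak-$*$ limit point $\nu$ of these (along some $T_k\to\infty$) is an invariant probability measure. The genuinely delicate point is the tightness --- ruling out that mass simultaneously persists in $B$ and escapes to infinity --- and this is where the minorization {\rm (K)} must be used quantitatively; it is the technical core of \cite{PR-JMMA2016}. Condition {\rm (K)} also supplies the regularity needed to promote the asymptotic invariance of the time averages to genuine invariance of $\nu$.

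Finally, granted an invariant probability measure $\nu$, I would produce an invariant density. Pick $x_0$ in the topological support of $\nu$ and let $\varepsilon,t_0,\theta$ be the corresponding data from {\rm (K)}. Invariance of $\nu$ and \eqref{w-eta3} give $\nu\ge\beta\,\theta\,\mu$ with $\beta:=\nu(B(x_0,\varepsilon))>0$, hence $\nu=\nu P(t_0+t)\ge\beta\,(P(t)\theta)\,\mu$ for every $t\ge0$; comparing absolutely continuous parts, $\frac{d\nu_{ac}}{d\mu}\ge\beta\,P(t)\theta$ a.e.\ for all $t\ge0$, and so, with $w_T:=\frac1T\int_0^TP(t)\theta\,dt$, also $\frac{d\nu_{ac}}{d\mu}\ge\beta\,w_T$ a.e.\ for all $T>0$. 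Thus $\{w_T:T>0\}$ is dominated by the fixed $L^1$ function $\frac1\beta\,\frac{d\nu_{ac}}{d\mu}$ and is therefore relatively weakly compact in $L^1$ by the Dunford--Pettis theorem; a weak $L^1$ limit $f^*$ of $w_{T_k}$ along some $T_k\to\infty$ is nonnegative, has $\|f^*\|=\|\theta\|>0$, and satisfies $P(r)f^*=f^*$ for every $r\ge0$ because $\|P(r)w_T-w_T\|\le 2r\|\theta\|/T\to0$ as $T\to\infty$ while $P(r)$ is weakly continuous on $L^1$. Then $f^*/\|f^*\|$ is an invariant density, contradicting the hypothesis of the theorem. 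This contradiction shows that $\{P(t)\}_{t\ge0}$ is sweeping from every compact set.
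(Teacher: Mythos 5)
First, note that the paper does not prove this theorem at all: it is quoted verbatim as Corollary~2 of \cite{PR-JMMA2016}, so there is no internal proof to compare against. Judged on its own terms, your sketch has a correct outer frame: the contrapositive reduction, the finite cover of $B$ by the balls furnished by (K), the minorization $P(t_n+\tau_i)f_0\ge(\delta/m)\eta_i$, and the final passage from an invariant probability measure to an invariant density via domination and Dunford--Pettis are all sound (that last step is clean and complete). But the middle step is a genuine gap, and not a routine one: producing a nonzero invariant measure from the mere failure of sweeping \emph{is} the content of the theorem, and you explicitly defer it to ``the technical core of \cite{PR-JMMA2016}''. As a proof of the statement this is circular.

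Moreover, the specific route you propose would not work as written. Failure of sweeping only gives $\limsup_{t\to\infty}\int_B P(t)f_0\,d\mu>0$, i.e.\ returns along \emph{some} sequence $t_n\to\infty$; if these times are sparse (say $t_n=2^n$), the Ces\`aro averages $\frac1T\int_0^T P(t)f_0\,dt$ can still tend to zero on every compact set, so even granting tightness the Krylov--Bogolyubov limit could be the zero measure. Two things are therefore missing: (i) an argument that some averaging procedure (Ces\`aro along a subsequence, Abelian/resolvent averages, or a Banach limit) produces a limit that is simultaneously nonzero on compacts and genuinely invariant, and (ii) the tightness/no-escape-of-mass argument itself. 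The actual proof in \cite{PR-JMMA2016} avoids Krylov--Bogolyubov altogether: it rests on an asymptotic decomposition of Foguel/Harris type for substochastic semigroups satisfying (K), which splits the state space into the union of supports of invariant densities and a region from which the semigroup sweeps; the corollary that no invariant density forces sweeping from all compact sets is then read off from that decomposition. If you want a self-contained proof, you would need to reconstruct that decomposition (or an equivalent lower-bound-function argument), not just invoke it.
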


We say that a stochastic semigroup $\{P(t)\}_{t\ge 0}$ satisfies the {\it Foguel alternative} if it
is asymptotically stable or sweeping from all compact sets  \cite{LiM}.
We now formulate the main result of this paper. 
\begin{theorem}
\label{Foguel-al}
The semigroup $\{U(t)\}_{t\ge 0}$ satisfies the Foguel alternative, i.e.
it is asymptotically stable or for every $f\in L^1[0,\infty)$ and $M>0$
\[
\lim_{t\to\infty}\int_0^M U(t)f(x)\,dx=0.
\]
\end{theorem}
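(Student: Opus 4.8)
The plan is to verify the two hypotheses that feed into the Foguel-alternative machinery packaged in Theorems~\ref{asym-th2} and~\ref{col-sw}: first, that $\{U(t)\}_{t\ge 0}$ satisfies condition (K), and second, that whenever $\{U(t)\}_{t\ge 0}$ has an invariant density it must be unique and strictly positive a.e. Once both are in place, the dichotomy is immediate: if an invariant density exists, Theorem~\ref{asym-th2} gives asymptotic stability; if none exists, Theorem~\ref{col-sw} gives sweeping from compact sets, which on $X=[0,\infty)$ is exactly the stated conclusion $\lim_{t\to\infty}\int_0^M U(t)f\,dx=0$.

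For condition (K) I would work directly from the lower bound \eqref{eq:oszac-tr-prob}, namely
\[
\mathcal P(t,x,\Gamma)\ge \Lambda e^{-\Lambda t}\int_0^t \mathbf 1_{\Gamma}(\pi_{\tau}Q(\pi_{t-\tau}x))\,d\tau .
\]
Fix $x_0\in[0,\infty)$. Since $\pi_s$ is a flow toward $0$ driven by \eqref{wane}, and $Q$ is $C^1$ with $Q(x)>x$, the map $(\tau,x)\mapsto \pi_{\tau}Q(\pi_{t-\tau}x)$ is $C^1$; for a well-chosen $t>0$ the one-parameter curve $\tau\mapsto \pi_{\tau}Q(\pi_{t-\tau}x_0)$, $\tau\in(0,t)$, is non-degenerate (its $\tau$-derivative is nonzero on a subinterval, because along it $g$ is applied with one sign while the infection jump $Q$ pushes the other way, so the two contributions cannot cancel identically). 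By a change of variables in the integral above, $\mathcal P(t,x_0,dy)$ then dominates a kernel $q(t,x_0,y)\,dy$ with $\int q(t,x_0,y)\,dy>0$, supported on a nontrivial interval $J$. A continuity/compactness argument — the same construction carried out uniformly for $x$ in a small ball $B(x_0,\varepsilon)$ — yields a single $\eta\ge 0$ with $\int\eta\,dy>0$ and $\mathcal P(t,x,dy)\ge \eta(y)\,dy$ for all $x\in B(x_0,\varepsilon)$, which is precisely (K).

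For uniqueness and positivity of an invariant density, suppose $f^*\in D$ is $U$-invariant. The support structure: using \eqref{eq:oszac} one shows that $U^*(t)\mathbf 1_J>0$ on an open set, and iterating the flow-plus-jump dynamics, the orbit of any point under "wane for a while, then boost by $Q$" eventually enters any prescribed interval; this irreducibility forces $\supp f^*$ to be a.e. all of $[0,\infty)$, hence $f^*>0$ a.e. Uniqueness then follows from the general fact that a partially integral stochastic semigroup which is irreducible in this sense cannot have two distinct invariant densities (two such would have overlapping supports and their difference would be a nonzero invariant signed measure, contradicting the partial-integrality ergodic decomposition). I expect the main obstacle to be the non-degeneracy step inside (K): one must rule out the coincidental possibility that $\tau\mapsto\pi_{\tau}Q(\pi_{t-\tau}x_0)$ is constant in $\tau$, and handle the boundary point $x_0=0$ (where $g(0)=0$ and the flow stalls) separately, using that $Q(0)>0$ so a jump immediately moves the process into $(0,\infty)$ where the flow is genuinely active.
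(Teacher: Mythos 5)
Your overall architecture is exactly the paper's: establish condition (K) from the lower bound \eqref{eq:oszac-tr-prob}, prove that any invariant density is strictly positive a.e.\ (whence unique), and then conclude via Theorems~\ref{asym-th2} and~\ref{col-sw}. However, two key steps are left with genuine gaps. First, the non-degeneracy of the curve $\tau\mapsto r(\tau,t,x_0)=\pi_{\tau}Q(\pi_{t-\tau}x_0)$, which you yourself flag as the main obstacle, is not resolved by your heuristic that ``the two contributions cannot cancel identically'': for a fixed generic $t$ the two terms in
\[
\frac{\partial r}{\partial \tau}
= g(\pi_{\tau}Q(\pi_{t-\tau}x))-\frac{g(\pi_{\tau}Q(\pi_{t-\tau}x))}{g(Q(\pi_{t-\tau}x))}\,Q'(\pi_{t-\tau}x)\,g(\pi_{t-\tau}x)
\]
could in principle cancel, and nothing you say excludes this. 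The paper's resolution is to let $t\to\infty$: then $\pi_{t}x_0\to 0$, so $g(\pi_t x_0)\to g(0)=0$ kills the second term and $\frac{\partial r}{\partial\tau}(0,t,x_0)\to g(Q(0))<0$ uniformly in $x_0$ near any fixed point. This single observation supplies the missing non-degeneracy, works for $x_0=0$ without the separate treatment you propose, and gives the uniform bounds $-M\le \partial r/\partial\tau\le-\delta$ needed to extract a common density $\eta$ over a ball $B(x_0,\varepsilon)$.

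Second, your positivity argument is logically incomplete. Reachability of every interval from every point (which is true here, though you do not prove it) only yields $\int_B f^*\,dx>0$ for every interval $B$, i.e.\ that $\operatorname{supp}f^*$ is all of $[0,\infty)$ as a closed set; this does \emph{not} imply $f^*>0$ a.e., which is what Theorem~\ref{asym-th2} requires. The paper closes this gap structurally: from $f^*\ge e^{-\Lambda t}S(t)f^*$ one gets $\pi_t(A)\subseteq A$ for $A=\{f^*>0\}$, so $A$ must be an interval $(0,a)$ or $(0,\infty)$; the case $a<\infty$ is then excluded by pushing mass through one jump, using $Q(a)>a$ (with a separate sub-case when $Q(a)=\min Q$, handled via assumption (A3)). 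Your uniqueness argument via disjoint supports is essentially the paper's and is fine once a.e.\ positivity is actually established.
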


In order to prove Theorem~\ref{col-sw} it is enough to check that the semigroup $\{U(t)\}_{t\ge 0}$ satisfies condition (K) and 
that if $f^*$ is an invariant density for $\{U(t)\}_{t\ge 0}$ then  $f^*(x)>0$ a.e. 
Indeed, if  $\{U(t)\}_{t\ge 0}$  has no invariant densities, then according to Theorem~\ref{col-sw} this 
semigroup is sweeping from compact sets. In the case when  $\{U(t)\}_{t\ge 0}$  has more then one invariant density
then it is easy to construct two invariant densities  $f_1^*$ and $f_2^*$ with disjoint supports, i.e. such that  $f_1^*f_2^*=0$ a.e.
Thus, the uniqueness of an invariant density will be a simple consequence of its strict positivity.
It means that if an invariant density  exists and we know that this density has to be positive then according to Theorem~\ref{asym-th2}  
the semigroup $\{U(t)\}_{t\ge 0}$ is asymptotically stable.

\begin{lemma}
\label{L:cond-K}
The semigroup $\{U(t)\}_{t\ge 0}$ fulfills condition \rm{(K)}. 
\end{lemma}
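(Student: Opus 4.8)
The plan is to verify condition (K) directly from the pointwise lower bound \eqref{eq:oszac-tr-prob}, which already expresses $\mathcal P(t,x,\Gamma)$ as an integral over a one-parameter family of points $\tau\mapsto \pi_\tau Q(\pi_{t-\tau}x)$. Fix $x_0\in[0,\infty)$. The idea is to change variables in the integral in \eqref{eq:oszac-tr-prob} from $\tau$ to the new variable $y=\pi_\tau Q(\pi_{t-\tau}x)$, so that the lower bound becomes an honest kernel estimate $\mathcal P(t,x,dy)\ge q(t,x,y)\,dy$ on some interval of $y$-values, with a density $q$ that is bounded below uniformly for $x$ near $x_0$.

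First I would fix a convenient time, say $t=1$ (any fixed $t>0$ works), and for $x$ in a small neighbourhood of $x_0$ consider the map $\Psi_x\colon(0,1)\to[0,\infty)$, $\Psi_x(\tau)=\pi_\tau Q(\pi_{1-\tau}x)$. Because $g$ and $Q$ are $C^1$ and the flow $\pi_t$ depends smoothly on its arguments (assumptions (A1), (A2)), $\Psi_x$ is $C^1$ in $\tau$ and continuous in $x$. The key computation is the derivative $\Psi_x'(\tau)$: differentiating the composition and using $\frac{d}{dt}\pi_t x = g(\pi_t x)$ one gets an expression of the form $\Psi_x'(\tau)=g(\Psi_x(\tau)) - Q'(\pi_{1-\tau}x)\,g(\pi_{1-\tau}x)\cdot\frac{\partial \pi_\tau}{\partial y}\big|_{y=Q(\pi_{1-\tau}x)}$. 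Since $g<0$ on $(0,\infty)$ and $Q'$ need not have a sign, I would instead argue more robustly: evaluate $\Psi_x$ at the two endpoints. At $\tau\to 0^+$, $\Psi_x(0)=Q(\pi_1 x)$; at $\tau\to 1^-$, $\Psi_x(1)=\pi_1(Q(x))$. Using $Q(z)>z$ together with strict monotonicity of the flow, one checks $\Psi_x(0)\ne\Psi_x(1)$ for $x$ near $x_0$ (indeed the "boost first, then wane" endpoint differs from the "wane first, then boost" endpoint), so $\Psi_x$ is non-constant; by continuity of $\Psi_x'$ there is a subinterval $[\tau_1,\tau_2]\subset(0,1)$ and $\delta>0$ on which $|\Psi_x'(\tau)|\ge\delta$ for all $x\in B(x_0,\varepsilon)$, provided $\varepsilon$ is small enough. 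On that subinterval $\Psi_x$ is a $C^1$ diffeomorphism onto its image, an interval $J_x$, and by continuity the $J_x$ all contain a common nondegenerate interval $J=[c,d]$ once $\varepsilon$ is small.

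Then I would perform the substitution: for any Borel $\Gamma$,
\[
\mathcal P(1,x,\Gamma)\ \ge\ \Lambda e^{-\Lambda}\int_{\tau_1}^{\tau_2}\mathbf 1_\Gamma(\Psi_x(\tau))\,d\tau
=\Lambda e^{-\Lambda}\int_{\Psi_x([\tau_1,\tau_2])}\mathbf 1_\Gamma(y)\,\frac{dy}{|\Psi_x'(\Psi_x^{-1}(y))|}
\ \ge\ \Lambda e^{-\Lambda}\,\delta^{-1}\int_{\Gamma\cap J}dy,
\]
so that $\mathcal P(1,x,dy)\ge \eta(y)\,dy$ with $\eta=\Lambda e^{-\Lambda}\delta^{-1}\mathbf 1_{J}$, which satisfies $\int\eta\,dy>0$. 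This is exactly condition (K) with $t=1$ and this $\varepsilon$.

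The main obstacle is the uniformity in $x$: I must choose a single subinterval $[\tau_1,\tau_2]$ and a single target interval $J$ that work simultaneously for all $x\in B(x_0,\varepsilon)$. This is where $C^1$-dependence of everything on $x$ is used — continuity of $(x,\tau)\mapsto\Psi_x(\tau)$ and of $(x,\tau)\mapsto\Psi_x'(\tau)$ on the compact set $\overline{B(x_0,\varepsilon)}\times[\tau_1,\tau_2]$ gives both the uniform lower bound $\delta$ on $|\Psi_x'|$ and a uniform nondegenerate interval $J\subset\bigcap_x J_x$. The only subtlety is the boundary point $x_0=0$, where $g(0)=0$ and the flow is stationary; there $\pi_t 0=0$ but $Q(0)>0$, so $\Psi_0(\tau)=\pi_\tau Q(0)$ is still a nonconstant $C^1$ curve moving away from (then back toward) $0$, and the same argument applies with a one-sided neighbourhood $B(0,\varepsilon)=[0,\varepsilon)$.
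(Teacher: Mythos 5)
Your overall strategy is the same as the paper's: start from the lower bound \eqref{eq:oszac-tr-prob}, view $\tau\mapsto\pi_\tau Q(\pi_{t-\tau}x)$ as a $C^1$ curve, and change variables to get a kernel bound $\mathcal P(t,x,dy)\ge\eta(y)\,dy$ on a common interval for $x$ near $x_0$. But there is a genuine gap at the one step you yourself flag as delicate: showing that the derivative of this curve is bounded away from zero on some subinterval, uniformly near $x_0$. Your substitute argument --- comparing the endpoints $\Psi_x(0)=Q(\pi_1x)$ and $\Psi_x(1)=\pi_1Q(x)$ and claiming they must differ --- is not justified by (A1)--(A3) and is false in general. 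Computing as in the paper, $\Psi_x'(\tau)=0$ exactly when $Q'(y)g(y)=g(Q(y))$ at $y=\pi_{1-\tau}x$, i.e.\ exactly when $Q$ locally conjugates the flow to itself, so that $Q(\pi_s y)=\pi_s Q(y)$. Nothing in the hypotheses forbids this on a compact piece of an orbit away from $0$: one can choose $Q$ with $Q(y)>y$, $Q(0)>0$, $C^1$, satisfying $Q'g=g\circ Q$ on the segment $\{\pi_s x_0:\,s\in[0,1]\}$, and then $\Psi_{x_0}$ is \emph{constant} on $[0,1]$, the endpoints coincide, and the image measure $\int_0^1\mathbf 1_\Gamma(\Psi_{x_0}(\tau))\,d\tau$ is a point mass with no absolutely continuous lower bound. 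So ``any fixed $t>0$ works'' is wrong, and the non-constancy of $\Psi_x$ needs a real argument.

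The paper closes exactly this hole by choosing $t$ \emph{large} depending on $x_0$, rather than $t=1$. As $t\to\infty$ the pre-jump state $\pi_{t-\tau}x$ is driven to the fixed point $0$, where the commutation identity cannot hold because $Q(0)>0$ while $g(0)=0$: explicitly, $\frac{\partial r}{\partial\tau}(0,t,x_0)=g(Q(\pi_tx_0))-Q'(\pi_tx_0)g(\pi_tx_0)\to g(Q(0))<0$, so the problematic term carrying $Q'$ is killed by $g(\pi_tx_0)\to 0$. Continuity then gives constants $0<\delta\le M$ with $-M\le\partial r/\partial\tau\le-\delta$ for $\tau\in[0,\tau_0]$ and $x\in B(x_0,\varepsilon')$, after which your change-of-variables and common-interval argument goes through verbatim. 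If you replace the endpoint comparison by this large-$t$ limit, your proof becomes the paper's.
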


\begin{proof}
From \eqref{eq:oszac-tr-prob} it follows that
\begin{equation}
\label{eq:oszac-tr-prob2}
\mathcal P(t,x,\Gamma)
\ge \Lambda e^{-\Lambda t}\int_0^t \mathbf 1_{\Gamma}(r(\tau,t,x))\,d\tau,
\end{equation}
where  $r(\tau,t,x)=\pi_{\tau}Q(\pi_{t-\tau}x)$.
First we want to find the derivative $\dfrac{\partial r}{\partial \tau}$.
We use the following formulas:
\begin{equation}
\label{eq:poch-pi}
\frac{\partial }{\partial t}(\pi_tx)=g(\pi_tx),\quad
\frac{\partial }{\partial x}(\pi_tx)=\frac{g(\pi_tx)}{g(x)}.
\end{equation}
The first formula follows directly from the definition of $\pi_t x$. Now we derive the second one. Let 
$\varphi(t,x)=\dfrac{\partial }{\partial x}(\pi_tx)$. Then
\[
\frac{\partial \varphi}{\partial t}(t,x)= \dfrac{\partial }{\partial x}\frac{\partial }{\partial t}(\pi_tx)= 
\dfrac{\partial }{\partial x}g(\pi_tx)=g'(\pi_tx)\frac{\partial }{\partial x}(\pi_tx)=g'(\pi_tx)\varphi(t,x).
\]
Hence  
\[
\frac{\partial }{\partial t}\left(\ln\varphi(t,x)\right)=g'(\pi_tx),
\]
but since $\varphi(0,x)=1$ we have
\[
\ln\varphi(t,x)=\int_0^t g'(\pi_sx)\,ds =\int_x^{\pi_tx} \frac{g'(y)}{g(y)}\,dy =\ln g(y)\bigg|_{y=x}^{y=\pi_tx}=\ln\left(\frac{g(\pi_tx)}{g(x)}\right),
\]
which proves the second formula of (\ref{eq:poch-pi}).
From the chain role we obtain
\[
\dfrac{\partial r}{\partial \tau}(\tau,t,x)
=g(\pi_{\tau}Q(\pi_{t-\tau}x))- 
\frac{g(\pi_{\tau}Q(\pi_{t-\tau}x))}{g(Q(\pi_{t-\tau}x))}Q'(\pi_{t-\tau}x)g(\pi_{t-\tau}x).
\]
If $\tau=0$ and $x=x_0$, then 
\[
\lim_{t\to\infty}\dfrac{\partial r}{\partial \tau}(0,t,x_0)
=\lim_{t\to\infty}[g(Q(\pi_tx_0))- 
Q'(\pi_tx_0)g(\pi_tx_0)]=g(Q(0)).
\]
Since $g(Q(0))< 0$ and $r$ is a $C^1$ function we can find a sufficiently large $t$
and positive constants $\varepsilon'$, $\delta$, $M$, and $\tau_0\le t$  
such that 
\[
-M \le \dfrac{\partial r}{\partial \tau}(\tau,t,x)  \le -\delta\quad\textrm{for $\tau\in [0,\tau_0]$ and $x\in B(x_0,\varepsilon')$}.
\]
From \eqref{eq:oszac-tr-prob2} it follows that
\begin{equation}
\label{eq:oszac-tr-prob3}
\mathcal P(t,x,\Gamma)
\ge \Lambda e^{-\Lambda t}\int_0^{\tau_0} \mathbf 1_{\Gamma}(r(\tau,t,x))\,d\tau
\ge \frac{\Lambda e^{-\Lambda t}}M\int\limits_{\Delta_x}\mathbf 1_{\Gamma} (y)\,dy
\end{equation}
for $x\in B(x_0,\varepsilon')$, where $\Delta_x =[r(\tau_0,t,x),r(0,t,x)]$.
The interval $\Delta_x$ has  the length at least $\delta\tau_0$.  
Let $\varepsilon\in (0,\varepsilon')$ be such that 
\[
|r(0,t,x)-r(0,t,x_0)|<\delta\tau_0/3\quad \text{for $x\in B(x_0,\varepsilon)$}. 
\]
Then we find an interval $\Delta$ with a length of at least  $\delta\tau_0/3$
such that  $\Delta \subset \Delta_x$ for $x\in B(x_0,\varepsilon)$.
Let $\eta(y)=\Lambda e^{-\Lambda t}M^{-1}\mathbf 1_{\Delta}(y)$.
Then $\mathcal P(t,x,dy)\ge \eta(y)\,dy$ for $x\in B(x_0,\varepsilon)$.
\end{proof}

\begin{lemma}
\label{L:positivity-of-density}
If $f^*$ is an invariant density with respect to $\{U(t)\}_{t\ge 0}$, then $f^*>0$ a.e.
\end{lemma}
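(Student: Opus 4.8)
The plan is to show that the invariant density $f^*$ has full support by a two-step argument: first establish that the support of $f^*$ is invariant under the dynamics generated by $\{U(t)\}_{t\ge 0}$ in a suitable sense, then use the transition-probability lower bound \eqref{eq:oszac-tr-prob} together with the flow structure (waning toward $0$, then a jump $Q$ upward) to propagate positivity from any point where $f^*>0$ to the whole half-line $[0,\infty)$. The key observation is that $g<0$ on $(0,\infty)$ means every trajectory $\pi_t x$ decreases toward $0$, so starting from \emph{any} $x$ the flow alone sweeps the state down to arbitrarily small values; combined with a jump via $Q$, which pushes the state strictly up, one can reach a large range of points.

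\textbf{Step 1: the support is a forward-invariant ``interval-like'' set.} Let $E=\{f^*>0\}$ (defined up to measure zero). Since $f^*=U(t)f^*$ for all $t$, and $U(t)$ is a stochastic operator with the lower bound \eqref{eq:oszac-tr-prob}, for a.e.\ $x\in E$ and every $t>0$ we have $\int_\Gamma f^*(y)\,dy \ge \int_\Gamma \Lambda e^{-\Lambda t}\int_0^t \mathbf 1_\Gamma(r(\tau,t,x))\,d\tau$-type contributions; more directly, $f^*(y)>0$ for a.e.\ $y$ in the set of points reachable from $E$ along $\tau\mapsto r(\tau,t,x)=\pi_\tau Q(\pi_{t-\tau}x)$. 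As in the proof of Lemma~\ref{L:cond-K}, each such curve $\{r(\tau,t,x):\tau\in[0,t]\}$ is a nondegenerate arc, so reachability has nonempty interior, and $E$ must (up to null sets) be a union of intervals stable under these reachability maps. Since $g(0)=0$ is a fixed point toward which everything flows, and $Q(0)>0$, the natural candidate for the minimal invariant set is an interval of the form $(0,b)$ or $(0,\infty)$.

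\textbf{Step 2: positivity propagates everywhere.} Pick $x_0$ with $f^*>0$ on a neighborhood of $x_0$ (possible since $\|f^*\|=1$). Flowing forward, $\pi_s x_0 \to 0$, so $f^*>0$ on a set accumulating at $0$; applying the jump, since $Q$ is continuous with $Q(0)>0$, we get $f^*>0$ near $Q(0)$. Now flow again: $\pi_s Q(0)$ traces out the whole interval $(0,Q(0))$ as $s$ ranges over $[0,\infty)$, giving $f^*>0$ a.e.\ on $(0,Q(0))$. Iterating the jump from points near the top of this interval and using $Q(x)>x$, the reachable set strictly grows; a standard argument (the increasing sequence $b_0=Q(0)$, $b_{n+1}=Q(b_n^-)$, or rather $\sup$ of $Q$ over $(0,b_n)$, which diverges unless it hits a fixed point of $Q$, but $Q(x)>x$ rules out fixed points) shows the reachable set exhausts $[0,\infty)$. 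Hence $f^*>0$ a.e.

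\textbf{The main obstacle} will be making Step 1 rigorous: turning the pointwise lower bound \eqref{eq:oszac-tr-prob} on transition probabilities into a clean statement that $\{f^*>0\}$ is, modulo null sets, exactly a forward-invariant interval, and handling the measure-zero ambiguities (the curves $r(\tau,t,x)$ are one-dimensional, so one must integrate over $x$ in a neighborhood and over $t$, or invoke the partial-integrality/irreducibility machinery). I expect the cleanest route is: use \eqref{eq:oszac-tr-prob} to show that for any open interval $J$ with $\int_J f^*>0$ and any point $y$ reachable from $J$ by a finite sequence of ``flow then jump'' steps, one has $f^*>0$ near $y$; then verify by the $g<0$, $Q(x)>x$, $Q(0)>0$ geometry that every point of $(0,\infty)$ is so reachable from any such $J$.
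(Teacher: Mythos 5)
Your strategy is essentially the paper's: both arguments first show that the support of $f^*$ is invariant under the downward flow (hence, up to null sets, an interval $(0,a)$ or $(0,\infty)$) and then use the first-order term $S_1(t)$ of the Dyson--Phillips expansion together with $Q(x)>x$ to rule out a finite right endpoint. The only structural difference is that you exhaust $(0,\infty)$ constructively by iterating $b_{n+1}=\sup\{Q(x)\colon x\in(0,b_n)\}$ (and your divergence argument is correct, since a finite limit $L$ would satisfy $L\ge Q(L)>L$), whereas the paper derives a contradiction from a single jump at the endpoint $a$, at the cost of a separate case when $Q(a)$ coincides with the minimum of $Q$.
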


\begin{proof}
Let $A=\{x\colon f^*(x)>0\}$. The set $A$ is defined up to a set of measure zero.
Since
\[
f^*(x)=U(t)f^*(x) \ge e^{-\Lambda t} S(t)f^*(x)= e^{-\Lambda t}f^*(\pi_{-t}x)\dfrac{\partial \pi_{-t}x}{\partial x}>0
\]
for $x\in \pi_t(A)$ and $t\ge 0$, we have $\pi_t(A) \subseteq A$ for arbitrary $t>0$, and consequently  $A=(0,a)$ or $A=(0,\infty)$.
We check that  $A=(0,\infty)$. Assume on the contrary that $A=(0,a)$.
Then $S(\tau)f^*(x)>0$ for  $x\in (0,b)$, $b=\pi_{\tau}(a)$.
Let $m=\min\{Q(x)\colon x\ge 0\}$ and assume that $Q(a)\ne m$. 
Observe that if  $f(x)>0$ for $x\in (0,b)$, 
then $P_Qf(x)>0$ for all $x\in (m,Q(b))$. 
It means that  
\[
S(t-\tau)P_QS(\tau)f^*(x)>0 \quad\text{for $x\in (\pi_{t-\tau}m, \pi_{t-\tau}Q(\pi_{\tau}(a)))$}.
\]
Since 
\[
f^*(x) \ge \Lambda e^{-\Lambda t} S_1(t)f^*(x)= \Lambda e^{-\Lambda t}\int_0^t S(t-\tau)P_QS(\tau)f^*(x)\,d\tau,
\]
we have $f^*(x)>0$ for $x\in (\pi_tm,\pi_tQ(a))$.
As $m<Q(a)$, the interval $I_t=(\pi_tm,\pi_tQ(a))$ is nontrivial.  
Moreover, $\pi_tQ(a)>a$ for sufficiently small $t>0$, 
which contradicts the definition of $A$.
In the case $Q(a)=m$ we need an extra argument. 
From assumption (A3) it follows that the transformation $Q$ cannot be  
constant on any nontrivial interval. Let $\overline m=\max\{Q(x)\colon x\le a\}$.
If $f(x)=P_QS(\tau)f^*(x)$, then $f(x)>0$ for $x\in Q((0,\pi_{\tau}a))$.
We can find an $\varepsilon>0$ such that $[\overline m-\varepsilon,\overline m]\subset Q((0,\pi_{\tau}a))$
for sufficiently small $\tau>0$. Hence    
$S(t-\tau)P_QS(\tau)f^*(x)>0$ for $x\in J_t$ where  
$J_t=(\pi_{t-\tau}(\overline m-\varepsilon), \pi_{t-\tau}\overline m)$. 
Using the same argument as in the previous case we check that
$f^*(x)>0$ for $x\in J_t$. Finally,  the inequality $\pi_{t-\tau}\overline m>m$ for sufficiently small $t$ 
implies that $J_t\not\subset A$, which 
contradicts the definition of $A$.     
\end{proof}

\begin{proof}[Proof of Theorem~\ref{Foguel-al}]
Theorem~\ref{Foguel-al} is a simple consequence of
Theorems \ref{asym-th2}, \ref{col-sw} and Lemmas \ref{L:cond-K}, \ref{L:positivity-of-density}.  
\end{proof}

Assumptions (A1)--(A3) are not sufficient to prove asymptotic stability of 
the semigroup  $\{U(t)\}_{t\ge 0}$, 
but according to Theorem~\ref{Foguel-al} we only need to 
check when the semigroup  $\{U(t)\}_{t\ge 0}$ is
\textit{weakly  tight}, i.e. there exists $\kappa>0$ such that
\begin{equation}
\label{k:T}
\sup\limits_{F\in \mathcal F}\limsup_{t\to\infty} \int_F U(t)f(x)\,dx\ge \kappa
\end{equation}
for $f\in D_0$, where $D_0$ is a dense subset of $D$ and $\mathcal F$ is the family of all compact subsets of $X$.
It is clear that weak tightness excludes the case when the semigroup is sweeping from compact sets. 
The process $(\xi_t)_{t\ge 0}$ has the infinitesimal generator  
\begin{equation}
\label{proc-gener}
\mathcal LV(x)=g(x)V'(x)+\Lambda V(Q(x))-\Lambda V(x).  
\end{equation}
The operators $\mathcal A$ and $\mathcal L$ are formally conjugated, i.e. 
\[
\int_0^{\infty} \mathcal Af(x)h(x)\,dx=\int_0^{\infty} f(x)\mathcal Lh(x)\,dx\quad
 \text{for $f\in  \mathfrak D(\mathcal A)$ and $h\in  \mathfrak D(\mathcal L)$}.
\] 
Assume that there exist a $C^1$-function $V\colon [0,\infty)\to [0,\infty)$
and constants  $\varepsilon,\,r,\overline M>0$ such that
\begin{equation}
\label{c:Hf-i2}
 \mathcal LV(x)\le \overline M \quad\textrm{for $\,x< r$} \quad\textrm{and}\quad \mathcal LV(x)\le -\varepsilon \quad\textrm{for $\,x\ge r$}.
\end{equation}
Then the semigroup  $\{U(t)\}_{t\ge 0}$ is weakly tight (see e.g. \cite[page 128]{RT-K-k} for a general result).

Since $V,Q,g$  are $C^1$-functions, the inequality $\mathcal LV(x)\le \overline M$ for $\,x< r$ is obviously fulfilled.
Therefore it remains to check when
 there exists a $C^1$-function $V\colon [0,\infty)\to [0,\infty)$
such that
\begin{equation}
\label{c:Hf}
\limsup_{x\to\infty} [g(x)V'(x)+\Lambda V(Q(x))-\Lambda V(x)]<0.   
\end{equation}
For example, assume that the immune status is 
roughly proportional to the concentration of antibodies  
and their degradation rate is almost constant, then 
$\lim\limits_{x\to\infty} g(x)=-\infty$. Also assume that 
the increase of the concentration of antibodies after the infection
is bounded, i.e. $Q(x)\le x+L$, then condition 
\eqref{c:Hf} is fulfilled with the function 
$V(x)=x$. It means that the semigroup  $\{U(t)\}_{t\ge 0}$ is asymptotically stable. 

Condition \eqref{c:Hf} also holds under much less restrictive assumptions.
For example if $g(x)\le -ax$ and $Q(x)\le bx$ for a sufficiently large $x$,
we can take $V(x)=x^{\gamma}$, $\gamma>0$, and check when 
\[
-a\gamma+\Lambda  b^{\gamma}-\Lambda<0.  
\]
If $a>\Lambda \log b$, then
taking a sufficiently small $\gamma$ we obtain \eqref{c:Hf}.

If $a<\Lambda \log b$, $g(x)\le-ax$ and $Q(x)\ge bx$  
then the semigroup is sweeping from compact sets. Indeed,
consider a negative moment of the process $(\xi_t)_{t\ge 0}$
\[
m_{\gamma}(t)={\rm E\,}\xi_t^{-\gamma}=\int_0^{\infty} x^{-\gamma}u(t,x)\,dx.
\]

One can easy check that
\[
\frac{d}{dt}m_{\gamma}(t)\le c_{\Lambda}m_{\gamma}(t),
\]
where $c_{\gamma}=\gamma a+\Lambda b^{-\gamma}-\Lambda$.
Assume that ${\rm E\,}\xi_0^{-\gamma}<\infty$ (this inequality is fulfilled for example if $\xi_0$ takes values
from some interval $[\alpha,\beta]$, $0<\alpha<\beta<\infty$).   
We have $c_{\gamma}<0$ for a sufficiently small $\gamma$, and consequently   
$\lim_{t\to\infty}m_{\gamma}(t)=0$. But in this case the semigroup
$\{U(t)\}_{t\ge 0}$ is not asymptotically stable and, in consequence,
$\{U(t)\}_{t\ge 0}$ is sweeping from compact sets.

\begin{remark}
\label{rem:tot-var}
Theorem~\ref{Foguel-al} can be formulated in a slightly stronger form. 
Denote by $\nu_t$ the distribution of the process $(\xi_t)_{t\ge 0}$
at time $t$. We do not assume now that the measure $\nu_0$ has a density.
Consider the case when there is an invariant density $f^*$.
Let $\nu^*$ be the measure with density  $f^*$.
Then the measures $\nu_t$ converge to the measure $\nu^*$ in the total variation norm. This result follows from the fact that
if $\nu_t^s$ is the singular part of the measure $\nu_t$, then
$\lim_{t\to\infty}\nu_t^s([0,\infty))=0$.
\end{remark} 

\section{Models with bounded phase spaces}
\label{s:finite}
Now we consider the case when the immune status is a number from the interval $X=[0,M]$.
We start with a version of the model introduced in Section~\ref{s:model}.
We assume that
\begin{itemize}
\item[(B1)] $g\colon [0,M]\to \mathbb R$ is a $C^1$-function such that $g(x)<0$ for $x>0$ and $g(0)=0$,
\item[(B2)] $Q\colon [0,M]\to (0,M]$ is a $C^1$-function such that $Q(x)>x$ for $x\in [0,M)$ and $Q(M)=M$, 
\item[(B3)] $|A|=0 \Longrightarrow  |Q^{-1}(A)|=0$, where $A$ is a Borel subset of $[0,M]$.
\end{itemize}
Then in the same way as in the previous sections we introduce a stochastic semigroup $\{U(t)\}_{t\ge 0}$ on the space $L^1(X,\mathcal B(X),|\cdot|)$
and prove an appropriate version of Theorem~\ref{Foguel-al}. But now $X$ is a compact space and, in consequence, the semigroup $\{U(t)\}_{t\ge 0}$ is not sweeping from compact sets. Therefore we can formulate  the following
\begin{theorem}
\label{As-stab}
The semigroup $\{U(t)\}_{t\ge 0}$ is asymptotically stable.
\end{theorem}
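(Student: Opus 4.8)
The plan is to reduce Theorem~\ref{As-stab} to the criterion of Theorem~\ref{asym-th2}, exactly as was done for Theorem~\ref{Foguel-al}, but now exploiting the compactness of $X=[0,M]$ to eliminate the sweeping alternative outright. First I would observe that assumptions (B1)--(B3) are the bounded-domain analogues of (A1)--(A3), so the construction of the stochastic semigroup $\{U(t)\}_{t\ge 0}$ on $L^1(X,\mathcal B(X),|\cdot|)$ via the Dyson--Phillips expansion \eqref{eq:dpf1b}--\eqref{eq:dpf2b} goes through verbatim, as do the lower estimates \eqref{eq:oszac} and \eqref{eq:oszac-tr-prob}. In particular the proof of Lemma~\ref{L:cond-K} carries over: for every $x_0\in X$ one computes $\lim_{t\to\infty}\frac{\partial r}{\partial\tau}(0,t,x_0)=g(Q(0))<0$ using the formulas \eqref{eq:poch-pi} (which depend only on $g\in C^1$ and $g<0$ on $(0,M]$), and then the same localization argument produces $\varepsilon>0$, $t>0$ and a function $\eta\ge 0$ with $\int\eta>0$ and $\mathcal P(t,x,dy)\ge\eta(y)\,dy$ for $x\in B(x_0,\varepsilon)$. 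Hence $\{U(t)\}_{t\ge 0}$ satisfies condition (K) and is partially integral.

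Next I would adapt Lemma~\ref{L:positivity-of-density}: if $f^*$ is an invariant density then, from $f^*=U(t)f^*\ge e^{-\Lambda t}S(t)f^*$ together with $S(t)f^*(x)=f^*(\pi_{-t}x)\,\partial_x(\pi_{-t}x)$, the set $A=\{f^*>0\}$ satisfies $\pi_t(A)\subseteq A$ and so $A=(0,a)$ or $A=(0,M)$. The argument ruling out $A=(0,a)$ with $a<M$ is the same one used in the proof of Lemma~\ref{L:positivity-of-density}: one pushes mass forward through $P_Q$ and then flows by $S(t-\tau)$, using $Q(x)>x$ on $[0,M)$ to enlarge the support past $a$, and uses (B3) (which forbids $Q$ being constant on a nontrivial interval) to handle the borderline case $Q(a)=m$. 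Note that $Q(M)=M$ causes no difficulty because we only need $a<M$ to reach a contradiction. Therefore any invariant density is strictly positive a.e., and the standard disjoint-supports argument shows the invariant density, if it exists, is unique.

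The one genuinely new point — and the main obstacle — is establishing the \emph{existence} of an invariant density, which on a noncompact domain was only obtained conditionally via the Foguel alternative. Here I would argue that on the compact space $X=[0,M]$ the semigroup cannot be sweeping from compact sets: since $X$ itself is compact, sweeping from compact sets would force $\int_X U(t)f\,dx\to 0$ for every $f\in D$, contradicting $\|U(t)f\|=1$. Thus the Foguel alternative (the analogue of Theorem~\ref{Foguel-al} proved exactly as before, combining the bounded-domain versions of Lemmas~\ref{L:cond-K} and~\ref{L:positivity-of-density} with Theorems~\ref{asym-th2} and~\ref{col-sw}) leaves only asymptotic stability as a possibility; in particular an invariant density must exist and, by the previous paragraph, be positive and unique, so Theorem~\ref{asym-th2} yields asymptotic stability. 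Equivalently and more directly, one may invoke Theorem~\ref{col-sw} in contrapositive form: since $\{U(t)\}_{t\ge 0}$ satisfies (K) but is not sweeping from the compact set $X$, it must have an invariant density, and then Lemma~\ref{L:positivity-of-density} (bounded version) plus Theorem~\ref{asym-th2} finish the proof.
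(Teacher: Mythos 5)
Your proposal is correct and follows essentially the same route as the paper: the paper proves the bounded-domain analogue of the Foguel alternative via the same two lemmas and then observes that, $X=[0,M]$ being compact, sweeping from compact sets would contradict $\int_X U(t)f\,dx=1$, leaving only asymptotic stability. Your write-up merely makes explicit the details (transfer of Lemmas~\ref{L:cond-K} and~\ref{L:positivity-of-density}, harmlessness of $Q(M)=M$ since only $a<M$ is needed for the contradiction) that the paper leaves to the reader.
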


Now we consider a model with an alternative version of the function $Q$ proposed in \cite{DGKT}. We replace assumptions (B2) and (B3) by the following
\begin{itemize}
\item[(B2${}'$)] there exists $K\in (0,M)$ such that   $Q\colon [0,K)\to (0,M]$ is a $C^1$-function such that $x<Q(x)<M$ for $x\in [0,K)$
and $Q(x)=M$ for $x\in [K,M]$, 
\item[(B3${}'$)] $|A|=0 \Longrightarrow  |Q^{-1}(A)|=0$, where $A$ is a Borel subset of $[0,M)$.
\end{itemize}
Illustrative examples of graphs of the transformation $Q$ for both considered cases are given in Fig.~\ref{r:imun-syst2}.     

\begin{figure}
\begin{center}
\begin{picture}(340,120)(20,5)
\includegraphics[viewport=115 554 492 703]{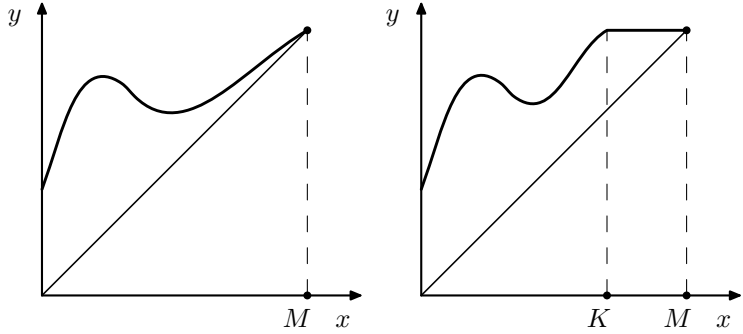}
\end{picture}
\end{center}
\caption{Examples of graphs of $y=Q(x)$.
Left: condition (B2); right: condition (B2${}'$).}
\label{r:imun-syst2}
\end{figure}

Observe that  in this case the transformation $Q$ does not satisfy condition (B3). Indeed, if $A=\{M\}$ then
$|A|=0$ but  the set $Q^{-1}(A)=[K,M]$ has a positive Lebesgue measure. It means that we cannot define the Frobenius-Perron operator $P_Q$
on the space $L^1(X,\mathcal B(X),|\cdot|)$. In order to use introduced earlier apparatus of stochastic semigroups we need to 
modify the definition of the infinitesimal generator $\mathcal A$ of the semigroup $\{U(t)\}_{t\ge 0}$.
The starting point can be the infinitesimal generator $\mathcal L$ of the process $(\xi_t)_{t\ge 0}$
given by \eqref{proc-gener} and we want to find the operator $\mathcal A$ as a formally adjoint operator of $\mathcal L$.
First, we define some modification of the Frobenius-Perron operator.
If  $\widetilde Q=Q\Big|_{[0,K)}$, then according to (B3${}'$) we can define an operator
$P_{\widetilde Q}\colon L^1[0,K]\to L^1[0,M]$ by formula \eqref{def-FP}.
Then $P_{\widetilde Q}$ is a stochastic operator in the sense that it is a linear transformation from 
$L^1[0,K]$ to  $L^1[0,M]$ and $P_{\widetilde Q}$ maps densities to densities.
Next we define the operator $\bar P_{\widetilde Q}\colon L^1[0,M]\to L^1[0,M]$ by $\bar P_{\widetilde Q}f=P_{\widetilde Q}\Big(f\Big|_{[0,K)}\Big)$.
Then $\bar P_{\widetilde Q}$ is a \textit{substochastic operator}, i.e. $\bar P_{\widetilde Q}$ is a positive contraction of $L^1$.
The operator $\mathcal A$ is defined on the set 
\[
\mathfrak D(\mathcal A)=\Big\{f\in L^1[0,M]\colon f'\in L^1[0,M]\quad \text{and \,$g(M)f(M)=-\Lambda \int_K^Mf(x)\,dx$}\Big\} 
\]
and $\mathcal A$ is given by
\[
\mathcal Af= -(gf)'+\Lambda \bar P_{\widetilde Q}f-\Lambda f.
\]
It is not difficult to check that the operators $\mathcal A$ and $\mathcal L$ are formally conjugated.

Now we can write the evolution of densities of the process $(\xi_t)_{t\ge 0}$
in the form of the abstract Cauchy problem \eqref{evol-eq}.
We can treat Eq. \eqref{evol-eq} as abstract notation of a first order
partial differential equation with some linear perturbation  
and some boundary condition. 
Such equations appear in many biological and  physical applications, e.g.
in structured population models~\cite{BPR,M-R,Mac-Tyr,PR-cell-cycle}.   

One can check that the operator $\mathcal A$ generates 
a stochastic semigroup on the space $L^1(X,\mathcal B(X),|\cdot|)$.
The proof of this result is rather standard so we only sketch it omitting the computational part.
Some new and  general results concerning piecewise deterministic Markov processes with boundary can be found in \cite{GMTK}.

We start with some definitions and two general results concerning generators of substochastic and stochastic semigroups.
Let $A$ be a linear operator defined on a linear subspace $\mathfrak D(A)$ of a Banach space $E$.
We say that $\lambda\in\mathbb R$ belongs to the \textit{resolvent set} $\rho(A)$ of $A$,
if the operator $\lambda I-A\colon \mathfrak D(A)\to E$ is invertible. The operator $\mathcal R(\lambda,A):=(\lambda I -A)^{-1}$ for $\lambda\in\rho(A)$
is called the \textit{resolvent operator} of $A$ at $\lambda$.
Now let $E=L^1(X,\Sigma,\mu)$. We call a linear operator $A$  \textit{resolvent positive} if there exists
$\omega\in\mathbb R$ such that $(\omega,\infty)\subseteq\rho(A)$ and
$\mathcal R(\lambda,A)\ge 0$  for all $\lambda>\omega$. Let $L^1_+=\{f\in L^1\colon f\ge 0\}$ and $\mathfrak D(A)_+=\mathfrak D(A)\cap L^1_+$.
A $C_0$-semigroup of substochastic operators on the space $L^1$ is called shortly a \textit{substochastic semigroup}.

\begin{theorem}
\label{t:subst}
A linear operator $A$ with the domain $\mathfrak D(A)\subset L^1$ is the generator of a substochastic semigroup on $L^1$ if and only if
$\mathfrak D(A)$ is dense in $L^1$, the operator  $A$ is resolvent positive, and
\begin{equation}
\label{e:subsem}
\int_X Af(x)\,\mu(dx)\le 0\quad \text{for all }f\in \mathfrak D(A)_+.
\end{equation}
\end{theorem}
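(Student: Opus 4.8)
The plan is to read Theorem~\ref{t:subst} as the positive-operator version of the Hille--Yosida generation theorem and to argue the two implications separately. \emph{Necessity.} Suppose $A$ generates a substochastic semigroup $\{P(t)\}_{t\ge0}$. Density of $\mathfrak D(A)$ in $L^1$ is a general property of $C_0$-semigroup generators. Since $\|P(t)\|\le1$, the half-line $(0,\infty)$ lies in $\rho(A)$ and $\mathcal R(\lambda,A)=\int_0^\infty e^{-\lambda t}P(t)\,dt$ for $\lambda>0$; the integrand being a positive operator, $\mathcal R(\lambda,A)\ge0$, so $A$ is resolvent positive (with $\omega=0$). For \eqref{e:subsem}, fix $f\in\mathfrak D(A)_+$. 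Then $P(t)f\ge0$, hence $\|P(t)f\|=\int_X P(t)f\,d\mu$; since $h\mapsto\int_X h\,d\mu$ is a bounded functional on $L^1$, this function of $t$ is differentiable at $0$ with derivative $\int_X Af\,d\mu$. As $t\mapsto\|P(t)f\|$ is nonincreasing, its right derivative at $0$ is $\le0$, which is exactly \eqref{e:subsem}.

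\emph{Sufficiency.} Assume (1)--(3). First note that $A$ is closed: a positive linear operator on $L^1$ is automatically bounded, so $\mathcal R(\lambda,A)$ is a bounded inverse of $\lambda I-A$. The decisive estimate is contractivity of $\lambda\mathcal R(\lambda,A)$: for $g\in L^1_+$ and $\lambda>\omega$ put $f=\mathcal R(\lambda,A)g\ge0$; then $Af=\lambda f-g$, and \eqref{e:subsem} gives $\lambda\|f\|=\lambda\int_X f\,d\mu\le\int_X g\,d\mu=\|g\|$, i.e. $\|\lambda\mathcal R(\lambda,A)g\|\le\|g\|$. Writing a general $g\in L^1$ as $g=g_+-g_-$ with $\|g\|=\|g_+\|+\|g_-\|$ and using linearity yields $\|\lambda\mathcal R(\lambda,A)\|\le1$ for all $\lambda>\omega$. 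To remove the restriction $\lambda>\omega$, I would expand $\mathcal R(\mu,A)=\sum_{n=0}^\infty(\lambda-\mu)^n\mathcal R(\lambda,A)^{n+1}$ for $0<\mu<\lambda$ with $\lambda>\omega$: the series converges because $\|\mathcal R(\lambda,A)\|\le1/\lambda$, so $\mu\in\rho(A)$ and hence $(0,\infty)\subset\rho(A)$; summing the geometric majorant gives $\|\mathcal R(\mu,A)\|\le1/\mu$, and every term is a positive operator since $\lambda-\mu>0$, so $\mathcal R(\mu,A)\ge0$ for all $\mu>0$. By the Hille--Yosida theorem, $A$ then generates a $C_0$-semigroup of contractions $\{P(t)\}_{t\ge0}$. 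Finally, positivity of $P(t)$ comes from the Yosida approximants $A_\lambda=\lambda^2\mathcal R(\lambda,A)-\lambda I$: since $\lambda^2\mathcal R(\lambda,A)\ge0$ one has $e^{tA_\lambda}=e^{-\lambda t}\sum_{k=0}^\infty\frac{(\lambda^2 t)^k}{k!}\mathcal R(\lambda,A)^k\ge0$, and $P(t)f=\lim_{\lambda\to\infty}e^{tA_\lambda}f$, so $P(t)\ge0$. Thus $\{P(t)\}_{t\ge0}$ is a substochastic semigroup.

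The step I expect to be the main obstacle is this sufficiency direction: extracting the sharp bound $\|\lambda\mathcal R(\lambda,A)\|\le1$ from the mass-loss inequality \eqref{e:subsem} via resolvent positivity, then extending the resolvent estimate and positivity from $(\omega,\infty)$ to the full half-line, and finally upgrading positivity of the resolvent to positivity of the semigroup through the exponential formula. The necessity part and the bare invocation of Hille--Yosida are routine.
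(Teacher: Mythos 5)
Your proof is correct, and it follows the standard route that the paper itself does not reproduce: the paper only cites \cite[Theorem 4.4]{RT-K-k} for this statement, and the argument there is essentially the same Hille--Yosida-type scheme you use (necessity via the Laplace-transform representation of the resolvent and monotonicity of $t\mapsto\|P(t)f\|$; sufficiency by deriving $\|\lambda\mathcal R(\lambda,A)\|\le 1$ from \eqref{e:subsem} on the positive cone, extending positivity and the bound to all $\lambda>0$ by the Neumann series, and recovering positivity of $P(t)$ from the Yosida approximants). No gaps.
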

The proof of this  result is given e.g. in \cite[Theorem 4.4]{RT-K-k}.  
The second result concerns positive perturbations of substochastic semigroups \cite[Section 6.2]{banasiakarlotti06}.
\begin{theorem}
\label{t:perturb}
Assume that the operator $A_0$ is the generator of a
substochastic semigroup $\{S(t)\}_{t\ge 0}$ on $L^1$ and $B$
is a positive and bounded operator on $L^1$
such that
\begin{equation}
\label{eq:lzero} 
\int_X (A_0f(x)+B f(x))\,\mu(dx)= 0\quad
\text{for}\quad f\in \mathfrak D(A_0)_+.
\end{equation}
Then the operator $A=A_0+ B$ is
the generator of a stochastic semigroup $\{U(t)\}_{t\ge 0}$ on $L^1$.
\end{theorem}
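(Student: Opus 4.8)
The plan is to exploit the boundedness of $B$, so that the generation statement becomes essentially automatic and all the real content lies in verifying positivity and mass conservation. Since $B$ is bounded, $A=A_0+B$ is a bounded perturbation of the generator $A_0$; by the bounded perturbation theorem it generates a $C_0$-semigroup $\{U(t)\}_{t\ge 0}$ with $\mathfrak D(A)=\mathfrak D(A_0)$, and this semigroup is given by the Dyson--Phillips expansion
\[
U(t)=\sum_{n=0}^\infty U_n(t),\qquad U_0(t)=S(t),\quad U_{n+1}(t)f=\int_0^t S(t-s)\,B\,U_n(s)f\,ds,
\]
the series converging in operator norm, uniformly on bounded time intervals. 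Hence the only two properties left to check are that each $U(t)$ is positive and that $\|U(t)f\|=\|f\|$ for $f\ge 0$, which together say precisely that $U(t)$ maps densities to densities.

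Positivity I would read off directly from the expansion. Since $\{S(t)\}_{t\ge 0}$ is substochastic it is in particular positive, and $B\ge 0$ by hypothesis; therefore each $U_n(t)$ is a positive operator, being an integral of compositions of positive operators, and so is the norm-convergent sum $U(t)$. In particular, for $f\ge 0$ we have $U(t)f\ge 0$, so that $\|U(t)f\|=\int_X U(t)f\,\mu(dx)$ and the problem of norm preservation reduces to showing that this integral is constant in $t$.

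For norm preservation I would first restrict to $f\in\mathfrak D(A_0)_+$. For such $f$ the orbit $t\mapsto U(t)f$ is continuously differentiable with $\frac{d}{dt}U(t)f=A\,U(t)f$, and by positivity $U(t)f\in\mathfrak D(A)\cap L^1_+=\mathfrak D(A_0)_+$ for every $t$, so the conservation identity \eqref{eq:lzero} may be applied to $U(t)f$, yielding
\[
\frac{d}{dt}\int_X U(t)f\,\mu(dx)=\int_X (A_0+B)\,U(t)f\,\mu(dx)=0.
\]
Thus $\int_X U(t)f\,\mu(dx)=\int_X f\,\mu(dx)$, i.e. $\|U(t)f\|=\|f\|$, for all $t\ge 0$. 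To reach arbitrary $f\in L^1_+$ I would invoke that $\mathfrak D(A_0)_+$ is dense in $L^1_+$: resolvent positivity of $A_0$ (part of Theorem~\ref{t:subst}) gives $f_\lambda:=\lambda\,\mathcal R(\lambda,A_0)f\in\mathfrak D(A_0)_+$ with $f_\lambda\to f$ in $L^1$ as $\lambda\to\infty$, and combining this with the isometry on the dense set and the continuity of $U(t)$ extends $\|U(t)f\|=\|f\|$ to all of $L^1_+$. Together with positivity this shows $\{U(t)\}_{t\ge 0}$ is stochastic.

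The step requiring the most care will be the norm-preservation argument, and within it the justification that \eqref{eq:lzero} may legitimately be applied to $U(t)f$ for every $t$: this rests on the invariance of the positive domain $\mathfrak D(A_0)_+$ under $U(t)$ (positivity together with $\mathfrak D(A)=\mathfrak D(A_0)$) and on the density of $\mathfrak D(A_0)_+$ in $L^1_+$. It is worth emphasising that the boundedness of $B$ is exactly what keeps this clean: it forces $\mathfrak D(A)=\mathfrak D(A_0)$ and the norm convergence of the Dyson--Phillips series, so that no defect or dishonesty phenomenon can arise and the \emph{equality} in \eqref{eq:lzero}, as opposed to merely the inequality \eqref{e:subsem}, produces genuine mass conservation rather than only substochasticity.
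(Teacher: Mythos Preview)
The paper does not actually prove this theorem; it is quoted as a known result from \cite[Section~6.2]{banasiakarlotti06}, so there is no in-paper proof to compare against.

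Your argument is correct and is essentially the standard one in the bounded-perturbation setting: the bounded perturbation theorem gives generation with $\mathfrak D(A)=\mathfrak D(A_0)$ and norm-convergent Dyson--Phillips expansion, positivity follows termwise since $S(t)\ge 0$ and $B\ge 0$, and the conservation identity \eqref{eq:lzero} applied along orbits in $\mathfrak D(A_0)_+$ (which is legitimate because $U(t)$ preserves $\mathfrak D(A)=\mathfrak D(A_0)$ and is positive) yields $\frac{d}{dt}\int_X U(t)f\,d\mu=0$; the extension to all of $L^1_+$ via $\lambda\mathcal R(\lambda,A_0)f\to f$ is the right density argument. The one step you leave implicit---interchanging $\frac{d}{dt}$ with $\int_X\cdot\,d\mu$---is harmless, since integration against $\mathbf 1_X$ is a bounded linear functional on $L^1$ and $t\mapsto U(t)f$ is norm-$C^1$ for $f\in\mathfrak D(A)$. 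Your closing remark that boundedness of $B$ rules out any honesty/defect issue is exactly the point: in the general Kato--Voigt theory one only gets a minimal substochastic extension, whereas here $A_0+B$ is already the full generator.
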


Now we apply Theorems \ref{t:subst} and \ref{t:perturb} to the operator $\mathcal A$.  
Consider the operator  $\mathcal A_0f= -(gf)'-\Lambda f$ with
the domain  $\mathfrak D(\mathcal A_0)=\mathfrak D(\mathcal A)$
and the operator $\mathcal Bf=\Lambda \bar P_{\widetilde Q}f$.
Then $\mathcal B$ is a positive and bounded operator on the space $L^1$.
The operator $\mathcal A_0$ generates a substochastic semigroup $\{S(t)\}_{t\ge 0}$ on the space $L^1$.
This statement is intuitively obvious because the equation 
$u'(t)=\mathcal A_0u(t)$ describes the movement of particles to the left
on the interval $[0,M]$  with the influx of new particles through the right end $M$ with velocity $\int_K^M u(t,x)\,dx$ and the efflux 
from the interval $[0,M]$ with velocity $\int_0^M u(t,x)\,dx$.
The proof that the operator $\mathcal A_0$ generates a substochastic semigroup 
follows from Theorem~\ref{t:subst}. 
It is easy to check that $\mathfrak D(\mathcal A_0)$ is a dense subset of $L^1$ and that condition \eqref{e:subsem} holds.
Then we find that
\[
\mathcal R(\lambda,\mathcal A_0)f(x)=
-\frac{e^{(\lambda+\Lambda)\varphi(x)}}{g(x)}
\left(\Lambda I(\lambda,f)+\int_x^Mf(r)e^{-(\lambda+\Lambda)\varphi(r)}\,dr\right),
\]
where $\varphi(x)=\int\limits_x^M\dfrac{dr}{g(r)}$ and 
$I(\lambda,f)$ is a constant such that 
\[
I(\lambda,f)=\int_K^M \mathcal R(\lambda,\mathcal A_0)f(x)\,dx.
\] 
It is also easy to observe that $\mathcal R(\lambda,\mathcal A_0)\ge 0$  for all $\lambda>0$.
Since 
\[
\begin{aligned}
\int_0^M &(\mathcal A_0f(x)+\mathcal B f(x))\,dx=
\int_0^M \Big(-(gf)'(x)-\Lambda f(x)+\Lambda \bar P_{\widetilde Q}f(x)\Big)\,dx\\
&=-g(M)f(M)-\Lambda\int_0^M f(x)\,dx
+\Lambda\int_0^M  P_{\widetilde Q}\Big(f\Big|_{[0,K)}\Big)(x) \,dx\\
&=\Lambda\int_K^M f(x)\,dx -\Lambda\int_0^M f(x)\,dx+\Lambda\int_0^K f(x)\,dx=0,
\end{aligned}
\]
according to Theorem~\ref{t:perturb} the semigroup $\{U(t)\}_{t\ge 0}$ generated by the operator $\mathcal A$ is a stochastic semigroup.

Theorem~\ref{As-stab} remains true in this case. The only difference in the 
proof is that instead of
formulas \eqref{eq:dpf1b}--\eqref{eq:dpf2b} we need to apply the two following ones
\begin{equation}
\label{eq:dpf1c}
U(t)f=\sum_{n=0}^\infty S_n(t)f, 
\end{equation}
where
\begin{equation}
\label{eq:dpf2c}
S_0(t)f=S(t)f,\quad S_{n+1}(t)f=\int_0^tS_{n}(t-\tau)\mathcal BS_0(\tau)f\,d\tau, \quad
n\ge 0.
\end{equation}





\begin{thebibliography}{99}
\bibitem{banasiakarlotti06} 
\newblock Banasiak J, Arlotti L. 
\newblock \emph{Perturbations of Positive Semigroups with Applications}. 
\newblock London: Springer Monographs in Mathematics. Springer-Verlag; 2006.


\bibitem{BPR} 
\newblock Banasiak J, Pich\'or K, Rudnicki R.
\newblock {Asynchronous exponential growth of a general structured population model}.
\newblock \emph{Acta Appl. Math.} 2012;119:149--166.

\bibitem{davis84} 
\newblock Davis MHA.
\newblock {Piecewise-deterministic Markov processes: A general class of non-diffusion stochastic models}.
\newblock \emph{J. Roy. Statist. Soc. Ser. B} 1984;46:353--388.


\bibitem{DGKT} 
\newblock  Diekmann O, de Graaf WF,  Kretzschmar MEE, Teunis PFM. 
\newblock {Waning and boosting: on the dynamics of immune status}.
\newblock  \emph{J. Math. Biol.} 2018;77:2023--2048.

\bibitem{GKTD} 
\newblock de Graaf WF, Kretzschmar MEE, Teunis PFM, Diekmann O. 
\newblock {A two-phase within-host model for immune response and its application to serological profiles of pertussis}.
\newblock \emph{Epidemics} 2014;9:1--7. 

\bibitem{GMTK} 
\newblock Gwi\.zd\.z P, Tyran-Kami\'nska M. 
\newblock {Densities for piecewise deterministic Markov processes with boundary}.
\newblock \emph{J. Math.\ Anal. Appl.} 2019;479:384--425.


\bibitem{LiM} 
\newblock Lasota A, Mackey MC.
\newblock \emph{Chaos, Fractals and Noise. Stochastic Aspects of Dynamics}.
 \newblock New York: Springer Applied Mathematical Sciences vol. 97 Springer; 1994.

\bibitem{M-R} 
\newblock Mackey MC, Rudnicki R.
\newblock {Global stability in a delayed partial differential equation describing cellular replication}.
\newblock \emph{J.\ Math.\ Biol.} 1994;33:89--109.

\bibitem {Mac-Tyr} 
\newblock Mackey MC,  Tyran-Kami\'nska M.
\newblock {Dynamics and density evolution in piecewise deterministic growth processes}.
\newblock \emph{Ann. Polon. Math.} 2008;94:111--129.


\bibitem {PR-jmaa2} 
\newblock Pich\'or K, Rudnicki R.
\newblock {Continuous Markov semigroups and stability of transport equations}.
\newblock \emph{J. Math.\ Anal.\ Appl.} 2000;249:668--685.

\bibitem{PR-JMMA2016} 
\newblock Pich\'or K, Rudnicki R.
\newblock {Asymptotic decomposition of substochastic operators and semigroups}.
\newblock \emph{J. Math. Anal. Appl.} 2016;436:305--321.

\bibitem{PR-SD2017} 
\newblock Pich\'or K, Rudnicki R.
\newblock {Asymptotic decomposition of substochastic semigroups and applications}.
\newblock \emph{Stoch. Dynam.} 2018;18:1850001, 18 pp.

\bibitem{PR-cell-cycle} 
\newblock Pich\'or K, Rudnicki R.
\newblock {Applications of stochastic semigroups to cell cycle models}.
\newblock \emph{Discrete Contin. Dyn. Syst. Ser. B} 2019;24:2365--2381. 


\bibitem{Rudnicki-LN} 
\newblock  Rudnicki R.
\newblock {Stochastic operators and semigroups and their applications in physics and biology}.
\newblock In: Banasiak J, Mokhtar-Kharroubi M, eds. \emph{Evolutionary Equations with Applications in Natural Sciences}, Lecture Notes in Mathematics, 
vol. 2126: Heidelberg: Springer 2015 (pp. 255--318).

\bibitem{RT-K-k} 
\newblock Rudnicki R, Tyran-Kami\'nska M.
\newblock \emph{Piecewise Deterministic Processes in Biological Models}.
\newblock Cham, Switzerland: SpringerBriefs in Applied Sciences and Technology, Mathematical Methods, Springer;  2017.

\bibitem{TEGB-MK}  
\newblock Teunis PFM, van Eijkeren JCH, de Graaf WF,  Bona\v ci\'c Marinovi\'c A,  Kretzschmar MEE.
\newblock {Linking the seroresponse to infection to within-host heterogeneity in antibody production}.
\newblock \emph{Epidemics} 2016;16:33--39.

\end{thebibliography}

\end{document}